\documentclass[12pt]{amsart}
\usepackage{a4wide}
\textheight=230mm 
\usepackage{url}
\usepackage{amssymb}
\usepackage{mathbbol}

\newcommand{\abs}[1]{\left\lvert #1 \right\rvert}  % modulus
 % norm
\DeclareMathOperator{\supp}{{\mathrm supp}}                   % support

\newcommand{\CC}{\mathcal{A}}

\newcommand\Prob{\mathbf{P}}
\newcommand{\CG}{{\mathcal{C}}} % cyclic group
\newcommand{\CX}{\mathcal{X}} % Cayley graph
\newcommand{\LL}{\mathcal{L}} % configurations
\newcommand\bp{{p}}
\newcommand{\f}{d} % number of generators
\newcommand{\opT}{\mathfrak{T}} % operator
\newcommand{\clA}{F} % cluster
\newcommand{\eCm}{e_{\CG_m}} % unit element of cyclic group
\newcommand{\trees}{\mathcal{T}} % operator

\DeclareMathOperator{\IE}{\mathbf{E}}                     % expectation
\DeclareMathOperator{\Tr}{\mathrm{Tr}}                     % expectation

\renewcommand{\phi}{\varphi{}}

\newcommand{\IC}{\mathbf{C}}                     % komplexe Zahlen
\newcommand{\IF}{\mathbf{F}}                     % free group
                     % natrliche Zahlen
\newcommand{\IZ}{\mathbf{Z}}                     % ganze Zahlen
\newcommand{\IQ}{\mathbf{Q}}                     % rationale Zahlen
                     % reelle Zahlen
\newcommand{\IP}{\mathbf{P}}                     % probability

% use \slshape for emphasised text
\makeatletter
  \DeclareRobustCommand\em
         {\@nomath\em \ifdim \fontdimen\@ne\font >\z@
                       \upshape \else \slshape \fi}
\makeatother

% use slanted in theorems
\newtheoremstyle{mythm}% name
  {9pt}%      Space above, empty = `usual value'
  {9pt}%      Space below
  {\slshape}% Body font
  {0pt}%         Indent amount (empty = no indent, \parindent = para indent)
  {\bfseries}% Thm head font
  {.}%        Punctuation after thm head
  { }% Space after thm head: \newline = linebreak
  {\thmname{#1} \thmnumber{ #2}\thmnote{ (#3)}}%         Thm head spec

\theoremstyle{mythm}

\newtheorem{theorem}{Theorem}[section]
\newtheorem{lemma}[theorem]{Lemma}
\newtheorem{proposition}[theorem]{Proposition}

\theoremstyle{remark}
\newtheorem{remark}[theorem]{Remark}

\theoremstyle{definition}
\newtheorem{definition}[theorem]{Definition}

% theorems
%\theoremstyle{plain} %%%%%%%%%%%%%%%%%%%%%%%%%%%%%%%%%

\numberwithin{equation}{section}

\setlength{\parindent}{0pt}

\begin{document}
\title{Free Lamplighter Groups and a Question of Atiyah}
\author{Franz Lehner}
\address{Institute for  Mathematical  Structure Theory\\
Graz Technical University\\
Steyrergasse 30, 8010 Graz, Austria}
\email{lehner@math.tugraz.at}
\author{Stephan Wagner}\thanks{This material is based upon work supported by
  the National Research Foundation of South Africa under grant number 70560.}
\address{
  Department of Mathematical Sciences\\
Stellenbosch University\\
Private Bag X1\\
Matieland 7602\\
South Africa
}
\email{swagner@sun.ac.za}
\begin{abstract}
  We compute the von Neumann dimensions of the kernels  of 
  adjacency operators 
  on free lamplighter groups and show that they are irrational,
  thus providing an elementary constructive answer
  to a question of Atiyah.
\end{abstract}

\subjclass{
%43A05, %Measures on groups and semigroups, etc.
20F65 %Geometric group theory - Tim Austin
 (Primary),
%                  47B80, %Random operators
 60K35, %percolation theory
 58J22 %Exotic index theory - Tim Austin
% 05C25%Graphs and abstract algebra - Tim Austin
 (Secondary)
                  }
\keywords{Atiyah conjecture, Wreath product, 
percolation, random walk, spectral measure, 
point spectrum, matching}

\date{\today}
\maketitle{}

\section{Introduction}

In 1976 Michael Atiyah
\cite{Atiyah:1976:elliptic}
introduced $L^2$-cohomology and $L^2$-Betti numbers
of manifolds with non-trivial fundamental group
and asked whether these numbers can be irrational.
%The definition of $L^2$-Betti numbers uses
%certain von Neumann algebras and
For background on these concepts we refer to
the book~\cite{Lueck:2002:invariants}.

It was shown later that 
Atiyah's question is equivalent to a purely analytic problem
on group von Neumann algebras
and we will exclusively work in this context.
Therefore no knowledge of cohomology theory and geometry is required
in the present paper, and in the remainder of this section
we recall a few basic facts from von Neumann algebra theory
which are necessary to state Atiyah's question.
Let $\Gamma$ be a finitely presented discrete group and $\IQ\Gamma$ its rational group
ring.
An element of this ring can be represented as a bounded left convolution
operator on the space $\ell_2(\Gamma)$ of square summable functions
on $\Gamma$. Denote this representation by $\lambda$.
The equivalent formulation of Atiyah's question in this
setting is:

\emph{
Let $\opT\in M_n(\IQ\Gamma)$ be a symmetric element. Is it possible that 
the von Neumann dimension $\dim_{L(\Gamma)} \ker \lambda(\opT)$ is an irrational
number?
}

Here the \emph{von Neumann dimension} is to be understood in the setting
of the group von Neumann algebra, denoted $L(\Gamma)$,
which is the completion of the rational convolution operator algebra in the
weak operator topology. More precisely, one looks for matrices
of convolution operators, but for simplicity, in the present paper we will
work in $L(\Gamma)$ exclusively.
A symmetric element $\opT\in\IQ\Gamma$ gives rise to a selfadjoint
convolution operator $\lambda(\opT)$, all of whose spectral projections
lie in the von Neumann algebra $L(\Gamma)$.
On $L(\Gamma)$ there is a normal
faithful trace $\tau(\opT) = \langle \opT\delta_e,\delta_e\rangle$
and the value of $\tau(p)$ at projections $p\in L(\Gamma)$
is called the \emph{von Neumann dimension function}.
The von Neumann dimension of a closed subspace of $\ell_2(\Gamma)$
is the von Neumann dimension of the corresponding orthogonal
projection, if the latter happens to be an element of $L(\Gamma)$.

While rationality of kernel dimensions has been shown for many
examples, (see, e.g., \cite {Linnell:1993:division,LinnellSchick:2007:finite}),
recently Atiyah's question was answered affirmatively
by Tim Austin~\cite{Austin:2009:rational}
who constructed an uncountable family of groups and rational
convolution operators with distinct kernel dimensions,
which a fortiori must contain irrational numbers, even transcendental ones.
Subsequently more constructive answers were given in
\cite{Grabowski:2010:turing,PichotSchickZuk:2010:closed}.
A stronger variant of Atiyah's question had been solved earlier
\cite{GrigorchukLinnellSchickZuk:2000:Atiyah,
GrigorchukZuk:2001:lamplighter,DicksSchick:2002:spectral}. 
In  these examples, so-called lamplighter groups play a central role.
Although these are not finitely presented,
they are recursively presentable (see, e.g., \cite{Baumslag:2005:embedding})
and therefore by a theorem of Higman \cite{Higman:1961:subgroups}
can be embedded into finitely presented groups.

In the present paper, complementing the above results,
we pursue the ideas
of~\cite{DicksSchick:2002:spectral}
and
compute explicitly the von Neumann dimension of the kernel of the
``switch-walk-switch'' adjacency 
operators on the free lamplighter groups $\CG_m\wr\IF_\f$ 
with respect to the canonical generators
and show that
they are irrational for any $\f\geq 2$ and $m>2\f-1$. 
This provides another elementary explicit example of a rational convolution
operator with irrational kernel dimension.

The basic ingredient is Theorem~\ref{thm:lamplighterpercolation},
which generalises the methods of Dicks and
Schick~\cite{DicksSchick:2002:spectral} from the infinite cyclic group
to arbitrary discrete groups and makes a link to percolation theory,
thus providing a quite explicit description of the spectrum of
switch-walk-switch
transition operators on lamplighter groups as the union of the spectra of
all finite connected subgraphs of the Cayley graph.
In particular, the lamplighter kernel dimension equals the expected
normalised
kernel dimension of the percolation cluster.

The paper is organised as follows.
In Section~\ref{sec:lamplighter} we review
the necessary prerequisites about lamplighter groups and percolation
and state the main result.

In Section~\ref{sec:matchings} we recall the connection between
spectra and matchings of finite trees and compute the generating
function of the kernel dimensions of finite subtrees of
the Cayley graph of the free group. As a final step
we integrate this generating function
in Section~\ref{sec:parametrisation} and thus obtain the
dimensions we are interested in. Another example of a free
product of groups whose Cayley graph is not a tree is discussed in
Section~\ref{sec:freeproduct}.

\emph{Acknowledgements.}
We thank Slava Grigorchuk for explaining Atiyah's question,
Martin Widmer and Christiaan van de Woestijne  for discussions about transcendental numbers,
and Mark van Hoeij for a hint to compute a certain abelian integral,
which ultimately led to the discovery of
the parametrisation \eqref{eq:parametrisation}.
Last but not least  we thank three anonymous referees
for numerous remarks which helped to improve the presentation.

\section{Lamplighter groups and percolation}
\label{sec:lamplighter}
Let $G$ be a discrete group and fix a symmetic generating set $S$.
We denote by $\CX=\CX(G,S)$ the Cayley graph
of $G$ with respect to $S$ and in the rest of the paper
identify the rational group algebra
element $\opT=\sum_{s\in S}s$ with the corresponding convolution operator,
which coincides with the adjacency operator on $\CX$.

\subsection{Lamplighter groups}
The name \emph{lamplighter group} has been coined in  recent years
to denote   wreath products of the form $\Gamma=\CG_m\wr G$.
This is the semidirect product
$\LL\rtimes G$, where $\CG_m$ is the cyclic group of order $m$
and $\LL=\bigoplus_G \CG_m$ is the group
of \emph{configurations} $\eta:G\to \CG_m$ with finite support,
where we define $\supp \eta = \{x\in G : \eta(x) \ne \eCm\}$.
The group operation on $\LL$ is pointwise multiplication in $\CG_m$
and the natural
left action of $G$ on $\LL$ given by $L_g\eta(x) = \eta(g^{-1}x)$
induces the twisted group law on $\CG_m\wr G$ 
$$
(\eta,g)(\eta',g') = (\eta\cdot L_g\eta',gg')
$$
Certain random walks on $\CG_m\wr G$ can be interpreted as
a lamplighter walking around on $G$ and turning
on and off lamps. A pair $(\eta,g)$ encodes both
the position of the lamplighter as an element $g\in G$ 
and the states of the lamps as a function $\eta\in\LL$.

We will consider here the ``switch-walk-switch'' lamplighter adjacency operator
$$
\tilde{\opT} = \sum_{s\in S} EsE
$$
on the lamplighter group $\Gamma$ where $E=\frac{1}{m}\sum_{h\in\CG_m} h$
is the idempotent corresponding to the uniform distribution on the lamp group
$\CG_m$. 
The \emph{underlying} convolution operator on $G$ is $\opT=\sum_{s\in S} s$.
Here we identify $\CG_m$ and $G$ with subgroups of $\Gamma$ via
the respective embeddings
$$
\begin{aligned}
\CG_m&\to \Gamma & \qquad\qquad G &\to\Gamma \\
 h &\mapsto (\delta_e^h,e)   & g &\mapsto (\iota, g)
\end{aligned}
$$
where $\iota$ is the neutral element of $\LL$ and 
$$
\delta_g^h(x) =
\begin{cases}
  h & x=g\\
  \eCm & x\ne g
\end{cases}
.
$$
\subsection{Percolation clusters}

Let $\CX=(V,E)$ be a graph. We use the standard notation
``$x\in\CX$'' for vertices and $x\sim y$ for the neighbour relation.
Fix a parameter $0 < \bp < 1$. In \emph{Bernoulli
site percolation} with parameter $\bp$ on $\CX$, we have i.i.d.~Bernoulli random
variables $Y_x\,$, $x \in \CX\,$, sitting at the vertices of $\CX$, with
$$
\Prob_{\bp}[Y_x=1] = \bp ,\qquad \Prob_{\bp}[Y_x=0] = q := 1-\bp\,.
$$
We can realise those random variables on the probability space 
$\Omega=\{0,1\}^\CX$ with a suitable probability measure $\Prob$.
Given $\omega\in\Omega$, denote by $\CX(\omega)$  the full subgraph
of $\CX$ induced on $\{x : Y_x(\omega)=1\}$ and for any vertex $x\in \CX$, denote by
$C_x(\omega)$ the connected component of $\CX(\omega)$ containing the vertex $x$,
which is called the \emph{percolation cluster} at $x$.
It is well known that for every connected graph
there is a critical parameter $\bp_c$
such that for any vertex $x$ a phase transition occurs in the sense that
for $\bp<\bp_c$ the cluster $C_x$ is almost surely finite and for
$\bp>\bp_c$ it is infinite with positive probability.
In order to make use of this fact we recall a combinatorial interpretation
of criticality.
\begin{definition}
  For a subset $\clA\subseteq \CX$ we denote its \emph{vertex boundary}
  $$
  d\clA = \{y\in \CX : y\not\in \clA, y\sim x \text{ for some $x\in \clA$}\}
  .
  $$
  For $x\in \CX$, we denote
  $$
  \CC_x=\{\clA\subseteq \CX: x\in \clA, \text{ $\clA$ finite and connected}\}
  \cup\{\emptyset\}
  $$
  the set of finite, possibly empty, path-connected neighbourhoods of $x$.
  These sets are sometimes called \emph{lattice animals}.
  The boundary of the empty animal 
  is defined to be 
  the set $\{x\}$. We denote by $\CC_x^*$ the set of animals at $x$ without
  the empty animal.
\end{definition}
%The percolation cluster $C_x$ is finite if and only if $C_x\in \CC_x$.
The probability of a fixed $\clA\in\CC_x$ to occur as  percolation
cluster at $x$ is
$$
\IP[C_x=\clA] = \bp^{\abs{\clA}} q^{\abs{d\clA}}
;
$$
thus for $\bp<\bp_c$ we have
\begin{equation}
\label{equ:sump1-p=1}
\sum_{\clA\in \CC_x} \bp^{\abs{\clA}} q^{\abs{d\clA}} = 1
\end{equation}
because some $\clA\in \CC_x$ occurs almost surely.

Now for a fixed animal $\clA$ consider the truncated operator 
$$
\opT_\clA = P_\clA\opT P_\clA
$$
where $P_\clA$ is the orthogonal projection onto the finite dimensional
subspace $\{f\in \ell_2(\CX) : \supp f\subseteq \clA\}$.
We denote the random percolation  adjacency operator by
$$
\opT_\omega = \opT_{C_e(\omega)},
$$

and by $\dim\ker \opT_\clA$ the dimension of the kernel of $\opT_F$
as a finite matrix, while $\frac{\dim\ker\opT_\clA}{\abs{\clA}}$ will
be  the
von Neumann dimension of the kernel of $\opT_\clA$ regarded as an element
of the finite von Neumann algebra $M_{\abs{\clA}}(\IC)$ with
von Neumann trace $\frac{1}{\abs{\clA}}\Tr$.
Special care is needed for the  empty animal, for which we define both
  the cardinality of the boundary 
  and the von Neumann kernel dimension to be $1$.

Then we have the following relation between the spectrum of the lamplighter operator
$\tilde{\opT}$ and the spectra of $\opT_\clA$.

\begin{theorem}[{\cite{LehnerNeuhauserWoess:spectrum,Lehner:2009:eigenspaces}}]
  \label{thm:lamplighterpercolation}
  The spectral measure of the lamplighter adjacency operator $\tilde{\opT}$ of
  order $m$ on a Cayley graph
  $\CX$ is equal to the expected spectral measure of the random truncated
  adjacency operator $\opT_\omega$ on the percolation clusters of $\CX$ with
  percolation parameter $p=1/m$.
  In addition, if $p<p_c$, then there is a one-to-one correspondence
  between the eigenspaces of $\tilde{\opT}$ and the collection of eigenspaces
  of $\opT_\omega$ and we have the formula
  \begin{equation}
    \label{eq:dimensionformula}
  \dim_{L(\Gamma)} \ker \tilde{\opT} = \IE \frac{\dim \ker \opT_\omega}{\abs{C_x(\omega)}}
 = \sum_{\clA\in\CC_e} \frac{\dim \ker \opT_\clA}{\abs{\clA}}
 p^{\abs{\clA}}q^{\abs{d\clA}}
 .
  \end{equation}
\end{theorem}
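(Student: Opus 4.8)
The plan is to realise $\tilde\opT$ as a direct integral over a space of lamp configurations whose fibres are the truncated adjacency operators $\opT_\clA$, and then to read off both assertions from this decomposition. First I would put the switch-walk-switch operator in a transparent form. Let $E_x$ denote the averaging idempotent of the lamp group at the site $x\in G$, so that $E=E_e$ and $sE_es^{-1}=E_s$. Since $E_e$ and $E_s$ act on disjoint lamp coordinates they commute, and $E_e\,s\,E_e=E_eE_s\,s$; hence $\tilde\opT=\sum_{s\in S}E_eE_s\,s$.

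Next I would use the decomposition $\ell_2(\Gamma)\cong\ell_2(\LL)\otimes\ell_2(G)$ together with the recentering unitary $\Phi\,\delta_{(\eta,g)}=\delta_{L_{g^{-1}}\eta}\otimes\delta_g$, which conjugates left translation by $s$ to $\mathrm{id}\otimes\lambda_s$ and thereby decouples the walk from the configuration shift. Fourier transforming the lamp variable turns $\ell_2(\LL)$ into $L^2(\widehat{\LL})$ with Haar measure and each idempotent $E_x$ into multiplication by the indicator $\charfunc[\chi_x\text{ trivial}]$. A direct computation then exhibits $\tilde\opT$ as a direct integral $\int^{\oplus}_{\widehat\LL}\tilde\opT(\chi)\,d\chi$ in which $\tilde\opT(\chi)\,\delta_{g}=\sum_s\charfunc[\chi_{g^{-1}}\text{ triv.}]\,\charfunc[\chi_{(sg)^{-1}}\text{ triv.}]\,\delta_{sg}$; after relabelling positions by $g\mapsto g^{-1}$ this is exactly the adjacency operator of the subgraph of $\CX$ induced by the open sites $\{x:\chi_x\text{ trivial}\}$, an edge being present iff both of its endpoints are open. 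Under Haar measure the events $\{\chi_x\text{ trivial}\}$ are i.i.d.\ with probability $1/m$, so the open set is Bernoulli site percolation with $p=1/m$ and the component of $e$ is the cluster $C_e$; thus $\tilde\opT(\chi)$ restricted to that component is precisely $\opT_{C_e}$. This identification — getting the recentering, the Fourier transform and the inversion relabelling to turn $\tilde\opT$ into the both-endpoints-open truncated operator, and checking that the direct integral faithfully reconstructs $\tilde\opT$ and its spectral projection at $0$ — is the step I expect to be the main obstacle.

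For the first assertion I would match traces. Since $\tau(\cdot)=\langle\cdot\,\delta_{(\iota,e)},\delta_{(\iota,e)}\rangle$ and $\Phi$ sends the cyclic vector $\delta_{(\iota,e)}$ to $\mathbf{1}\otimes\delta_e$, the decomposition gives $\tau(\tilde\opT^n)=\IE\langle\opT_{C_e}^n\delta_e,\delta_e\rangle$, the expected $(e,e)$-entry, where only the cluster of $e$ contributes. To pass from this rooted quantity to the normalised trace $\frac{1}{\abs{C_e}}\Tr\opT_{C_e}^n$ defining the spectral measure of the finite matrix $\opT_\omega$, I would invoke the mass-transport principle on the unimodular Cayley graph $\CX$: transporting mass $\frac{1}{\abs{C_e}}\langle\opT_{C_e}^n\delta_x,\delta_x\rangle$ from $e$ to each $x\in C_e$ yields $\IE\langle\opT_{C_e}^n\delta_e,\delta_e\rangle=\IE\bigl[\frac{1}{\abs{C_e}}\Tr\opT_{C_e}^n\bigr]$. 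Equality of all moments identifies the spectral measure of $\tilde\opT$ with the expected normalised spectral measure of $\opT_\omega$.

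For the second assertion, when $p<p_c$ every fibre $\tilde\opT(\chi)$ is almost surely block-diagonal over finite clusters, hence finite-dimensional on each block; the direct integral therefore has the claimed one-to-one correspondence between eigenspaces of $\tilde\opT$ and eigenspaces of $\opT_\omega$, assembled from eigenvectors supported on single finite clusters. Evaluating the atom at $0$ gives $\dim_{L(\Gamma)}\ker\tilde\opT=\IE\bigl[\frac{\dim\ker\opT_{C_e}}{\abs{C_e}}\bigr]$, and expanding the expectation over animals with $\Prob[C_e=\clA]=p^{\abs\clA}q^{\abs{d\clA}}$ — the conventions $\abs{d\emptyset}=1$ and normalised kernel dimension $1$ matching the closed-origin event of probability $q=p^0q^1$ — yields formula~\eqref{eq:dimensionformula}.
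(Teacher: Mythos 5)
The paper does not prove Theorem~\ref{thm:lamplighterpercolation} itself but quotes it from the cited references, and your argument --- the recentering unitary, the Fourier transform on the lamp coordinates turning the averaging idempotents into indicators of i.i.d.\ open sites, the resulting direct integral over Bernoulli percolation configurations, and the trace identification at the cyclic vector --- is precisely the proof given in those sources, and it is correct. The one point to watch is that the passage from the rooted quantity $\IE\langle \opT_\omega^n\delta_e,\delta_e\rangle$ to the normalised trace $\IE\bigl[\abs{C_e}^{-1}\Tr\opT_{C_e}^n\bigr]$ via mass transport presupposes finite clusters, so for the first assertion (which holds for all $p$, including $p>p_c$) the ``expected spectral measure'' must be read as the rooted one, the normalised form being available only in the subcritical regime where you actually use it.
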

In general it is hard to evaluate formula~\eqref{eq:dimensionformula}
because one has to compute the kernels of the adjacency matrices of
all finite
clusters. Due to the recursive structure of the Cayley tree however
it is possible to compute an algebraic equation for the generating function
$$
\sum_{T\in \CC_e^*} (\dim\ker \opT_\clA) x^{\abs{\clA}}
$$
on free groups and
to evaluate~\eqref{eq:dimensionformula} by integrating this function,
thus obtaining our main result, which concludes this section.
\begin{theorem}
  Denote $g_1,g_2,\dots,g_\f$ the canonical generators
  of the free group  $\IF_\f$ and
  consider the adjacency operator $\opT=\sum g_i+g_i^{-1}$ 
  on its Cayley graph. Then the von Neumann dimension of the kernel of
  the corresponding lamplighter operator
  $\tilde{\opT}=E\opT E$ on $\CG_m\wr\IF_\f$
  is the number
  $$
  \dim_{L(\Gamma)}\ker \tilde{\opT}
  = 1-2p+\frac{(\tau(p)-1)(2-2\f+2\f{}\tau(p))}{\tau(p)^2}
  $$
  where $p=1/m$ and $\tau(p)$ is the unique positive solution of the equation
  $t^{2\f-1}-t^{2\f-2}=p$. For $\f>1$, this is an irrational algebraic number,
  e.g., for $\f=2$ and $m=4$, the dimension is
  $$ -\frac56 - \frac{400}{3(766+258\sqrt{129})^{1/3}} +
  \frac{2(766+258\sqrt{129})^{1/3}}{3} \approx 0.850971. 
  $$
\end{theorem}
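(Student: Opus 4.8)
The plan is to feed Theorem~\ref{thm:lamplighterpercolation} with an explicit evaluation of the series in \eqref{eq:dimensionformula} over the finite subtrees of the $2\f$-regular Cayley graph $\CX$ of $\IF_\f$, working below criticality $p<p_c=1/(2\f-1)$, i.e.\ $m>2\f-1$, so that the eigenspace correspondence and the convergence in \eqref{eq:dimensionformula} are available. The first ingredient is the classical fact that the nullity of the adjacency matrix of a forest equals the number of vertices minus twice its matching number, so that $\dim\ker\opT_\clA = \abs{\clA} - 2\nu(\clA)$, where $\nu(\clA)$ is the matching number of the finite tree $\clA$. The second ingredient is a boundary count special to trees: since $\CX$ has no cycles, every vertex outside a connected $\clA$ that is adjacent to $\clA$ is joined to it by exactly one edge, and counting the $2\f\abs{\clA}$ half-edges emanating from $\clA$ against the $2(\abs{\clA}-1)$ internal ones gives $\abs{d\clA}=2(\f-1)\abs{\clA}+2$ for every nonempty animal. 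Writing $q=1-p$ and $z=p\,q^{2\f-2}$, this collapses the two exponents in \eqref{eq:dimensionformula} into a single power $p^{\abs{\clA}}q^{\abs{d\clA}}=q^2 z^{\abs{\clA}}$, and after separating off the empty animal (which contributes $q$) I obtain
\begin{equation*}
\dim_{L(\Gamma)}\ker\tilde{\opT} = q + q^2\int_0^{z}\frac{K(w)}{w}\,dw,\qquad K(w)=\sum_{\clA\in\CC_e^*}(\dim\ker\opT_\clA)\,w^{\abs{\clA}},
\end{equation*}
the factor $1/\abs{\clA}$ in \eqref{eq:dimensionformula} being exactly what the integration $\int_0^z w^{\abs{\clA}-1}\,dw$ produces.

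Next I would compute $K$. Rooting the subtrees at $e$ and exploiting the self-similarity of the tree, I encode a maximum matching of a rooted subtree by one bit recording whether its root can be left exposed by some maximum matching; the root is forced into the matching (state $M$) precisely when at least one of its chosen neighbours is in the exposable state (state $U$), and the matching number then increases by one. Letting $a(w)$ and $b(w)$ be the generating functions of the two kinds of branches hanging off a single edge (so that a branch root carries $2\f-1$ potential children), this yields the algebraic system $a=w(1+b)^{2\f-1}$ and $b=w\bigl[(1+a+b)^{2\f-1}-(1+b)^{2\f-1}\bigr]$, whence $s:=a+b$ satisfies the single equation $s=w(1+s)^{2\f-1}$ and the subtree counting series is $N(w)=\sum_{\clA\in\CC_e^*}w^{\abs{\clA}}=w(1+s)^{2\f}=s(1+s)$. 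Marking the matching number by an auxiliary variable and differentiating this system at the marking value $1$ expresses $K(w)$ as an explicit rational expression in $w,s,a,b$ and the first-order sensitivities of $a,b$, hence as an algebraic function of $w$.

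The crux is the evaluation of the abelian integral $\int_0^z K(w)\,dw/w$. I would introduce the rational parametrisation suggested by the shape of the answer: set $v=1-t^{2\f-2}(t-1)$ and $w=v^{2\f-2}(1-v)$, which is exactly the inversion of $s=w(1+s)^{2\f-1}$ under $1+s=1/v$, together with $1+b=1/(vt)$. A short computation shows this is consistent with $a=w(1+b)^{2\f-1}$ iff $t^{2\f-2}(t-1)=1-v$, so that a single parameter $t$ rationally parametrises all of $w,s,a,b$ and their sensitivities; as $w$ runs from $0$ to $z=p\,q^{2\f-2}$ the parameter $t$ runs from $1$ to the root $\tau=\tau(p)$ of $t^{2\f-1}-t^{2\f-2}=p$ (at the endpoint $v=q$). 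Under this substitution both $K(w)/w$ and $dw$ become rational in $t$, so the integral reduces to an elementary one; the logarithmic terms that could a priori appear cancel, and evaluating the resulting rational antiderivative between $t=1$ and $t=\tau$ yields the closed form $1-2p+(\tau-1)(2-2\f+2\f\tau)/\tau^2$. Discovering this parametrisation is the main obstacle; everything downstream of it is a finite, if tedious, computation.

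Finally, for irrationality, observe that $\tau$ is a root of $f(t)=m\,t^{2\f-1}-m\,t^{2\f-2}-1$ and that $\tau>1$, since $t^{2\f-2}(t-1)=p>0$ forces $t>1$ and the left side is strictly increasing there. The rational root theorem permits only roots $\pm1/\ell$ with $\ell\mid m$, all of absolute value $\le 1$, so $\tau$ is irrational; for $\f\ge 2$ one verifies in addition that the minimal polynomial of $\tau$ has degree at least $3$ (for $\f=2$ this is immediate, a cubic without rational roots being irreducible). Since the dimension is $g(\tau)$ for the fixed rational function $g(x)=1-2p+(x-1)(2-2\f+2\f x)/x^2$ over $\IQ$, a rational value of $g(\tau)$ would exhibit $\tau$ as a root of a quadratic over $\IQ$, contradicting $[\IQ(\tau):\IQ]\ge 3$; hence the dimension is irrational. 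For $\f=2$, $m=4$ the relevant polynomial is the irreducible cubic $4t^3-4t^2-1$; solving it by Cardano's formula and substituting into $g$ gives the stated radical expression, numerically $\approx 0.850971$.
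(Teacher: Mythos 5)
Your overall route coincides with the paper's: the same reduction via Theorem~\ref{thm:lamplighterpercolation} and the boundary count $\abs{d\clA}=(2\f-2)\abs{\clA}+2$ down to the integral $q+q^2\int_0^{pq^{2\f-2}}G(x)\,dx/x$; the same exposable/forced dichotomy for rooted subtrees (your states $U$ and $M$ are the paper's types $A$ and $B$, and your $a$, $1+b$ are the paper's $A$, $B$, so your system is \eqref{eq:ABequations} after this change of variable); and the same rational parametrisation, since your $v=1-t^{2\f-2}(t-1)$ is the paper's $1+t^{k-1}-t^k$ and your relations $1+s=1/v$, $1+b=1/(vt)$, $w=v^{k-1}(1-v)$ reproduce \eqref{eq:parametrisation} exactly, with the same endpoint $\tau(p)$. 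Asserting that the resulting rational integrand has a rational (rather than logarithmic) antiderivative without exhibiting it is at the same level of rigour as the paper, which also found the antiderivative by computer algebra and notes that it can be checked directly.

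The genuine gap is in the irrationality argument for $\f\geq 3$. You correctly reduce a putative rational value of the dimension to the statement that $\tau=\tau(p)$ satisfies a nonzero integer polynomial of degree at most $2$, and then invoke $[\IQ(\tau):\IQ]\geq 3$ --- but you justify this only for $\f=2$, where $Q(t)=mt^{3}-mt^{2}-1$ is a cubic without rational roots and hence irreducible. For $\f\geq 3$ the polynomial $Q(t)=mt^{2\f-1}-mt^{2\f-2}-1$ has degree at least $5$, and the rational root theorem only yields $[\IQ(\tau):\IQ]\geq 2$; nothing in your argument excludes an irreducible quadratic factor of $Q$ vanishing at $\tau$ (Gauss's lemma applied to an \emph{arbitrary} primitive quadratic divisor only forces its constant term to be $\pm1$ and its leading term to divide $m$, which is no contradiction --- compare $t^2-t-1$). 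The paper circumvents this by exploiting the \emph{specific} quadratic $P(t)=(b(k+1)-a)t^2-2bkt+b(k-1)$ that $\tau$ would have to satisfy: its constant coefficient $b(k-1)\geq 2$ together with its content being at most $2$ forces $k=3$, $b=1$ and $a$ a positive even integer, whence $C(p)=q-p+a>1$, contradicting the fact that a von Neumann kernel dimension is at most $1$. You need either this argument or a genuine proof that $Q$ admits no quadratic factor with root $\tau$; as written, your proof establishes the irrationality claim only for $\f=2$.
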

\begin{remark}
  Similar computations are possible in more general free product groups
  $G_1*G_2*\dots*G_n$, where each factor $G_i$ is a finite group
  whose Cayley graph possesses only cycles of length $\equiv 2\mod 6$,
  like the cyclic groups $\IZ_2$,  $\IZ_6$,  $\IZ_{10}$, etc.
  An example is briefly discussed in Section~\ref{sec:freeproduct}.
  It should be noted however that our technique does not work
  for nonzero eigenvalues, because in this case 
  it is more complicated to obtain the multiplicity of the eigenvalue.
  Moreover, 
  in contrast to other approaches
  (\cite{DicksSchick:2002:spectral,GrigorchukLinnellSchickZuk:2000:Atiyah}),
  it is restricted to adjacency operators,
  i.e., all group elements get the same weight.
\end{remark}

\section{Matchings, rooted trees, and generating functions}
\label{sec:matchings}
In this section we  prepare the evaluation of the series
\eqref{eq:dimensionformula} by computing a generating function.
To this end let us recall some notations.

Let $G=(V,E)$ be a finite graph. 
By \emph{characteristic polynomial} $\chi(G,x)$ 
(resp., \emph{spectrum, kernel dimension}) 
\emph{of a graph} we mean the characteristic polynomial (resp., spectrum,
kernel dimension) of its adjacency matrix.
A \emph{matching} of a finite graph is a set of disjoint edges,
i.e., every vertex occurs as an end point of at most one edge.
A \emph{perfect matching} is a matching which covers all the vertices
of the graph.
The \emph{matching polynomial} of a graph on $n$ vertices is the polynomial
$$
\sum_{j \geq 0} (-1)^j m(G,j)\, x^{n-2j},
$$
where $m(G,j)$ is the number of matchings of cardinality $j$.

It is well known (see, e.g., \cite{Godsil:1984:spectra,Cvetkovic:1988:recent})
that the characteristic polynomial of a finite tree
coincides with its matching polynomial.
Since the kernel dimension equals the multiplicity of eigenvalue zero,
which in turn is the degree of the polynomial $x^n\chi(G,x^{-1})$,
it follows immediately that the dimension of the kernel of a tree is given by
\begin{equation}\label{eq:kerdim}
\nu(T) = \dim \ker T = n - 2\mu(T),
\end{equation}
where $\mu(T)$ denotes the size of a matching of maximal cardinality in $T$. 
In particular, $\dim \ker T = 0$ if and only if $T$ has a perfect matching.

As a first step to evaluate \eqref{eq:dimensionformula}
we have to determine the generating function
$$G(x) = \sum_{T\in \CC_x^*} (\dim \ker T)\, x^{|T|} 
   = \sum_{T\in \CC_x^*} (|T| - 2 \mu(T))\, x^{|T|},$$
where the sum is taken over all nonempty animals $T$, i.e., 
connected subgraphs of the
Cayley graph of the free group $\IF_\f{}$ that contain the unit element $e$. 
To this end, we regard animals as rooted trees, with the root at $e$. 

\begin{definition}
  A \emph{ $k$-ary tree} is a planar rooted tree such that every vertex
  has at most $k$ children. Hence every vertex has degree at most $k+1$, and the root has degree at most $k$.
  A \emph{branch} of a $k$-ary tree is a rooted tree obtained
  by splitting off a neighbor of the root together with its offspring.
  Thus a $k$-ary tree can be defined recursively as
  a rooted tree with an ordered collection of $k$ possibly empty 
 branches.
\end{definition}
Thus our animals are $k$-ary trees with $k=2\f-1$, with the single exception that the root vertex may have degree $k+1$ (but all branches are $k$-ary trees according to the above definition).
For reasons which will become apparent soon
we split the family of rooted trees into two groups, 
following ideas similar to those employed in \cite{Wagner:2007:number}: 
\begin{definition}
  We say that a rooted tree is of \emph{type} $A$ if it has a maximum matching
  that leaves the root uncovered. Otherwise $T$ is of type $B$.
\end{definition}

Suppose that $T$ is of type $A$. Then it has a maximum matching that does not
cover the root and is therefore a union of maximum matchings in the various
branches of $T$. Hence if $S_1,S_2,\cdots,S_k$ are the branches of $T$, we have 
$$\mu(T) = \mu(S_1) + \mu(S_2) + \cdots + \mu(S_k).$$
We claim that in this case all $S_j$ are of type $B$.
For, suppose on the contrary that one of the branches, say $S_j$, is of type
$A$. Then we can choose a maximum matching in $S_j$ that does not cover the
root. Choose maximum matchings in all the other branches as well, and add the
edge between the roots of $T$ and $S_j$ to obtain a matching of cardinality
$$\mu(S_1) + \mu(S_2) + \cdots + \mu(S_k) + 1,$$
contradiction.
Conversely, if all branches of $T$ are of type $B$, 
then $T$ is of type $A$: clearly, the
maximum cardinality of a matching that does not cover the root is $\mu(S_1) +
\mu(S_2) + \cdots + \mu(S_k)$, so it remains to show that there are no
matchings of greater cardinality that cover the root. Suppose that such a
matching contains the edge between the roots of $T$ and $S_j$. Then, since
$S_j$ is of type $B$, the remaining matching, restricted to $S_j$, can only
contain at most $\mu(S_j) - 1$ edges. Each of the other branches $S_i$ can only
contribute $\mu(S_i)$ edges, so that we obtain a total of
$$\mu(S_1) + \mu(S_2) + \cdots + \mu(S_k)$$
edges, as claimed. This proves the following fact:
\begin{lemma}\label{lem:rec}
  \begin{enumerate}
   \item Let $T$ be a rooted tree and $S_1,\ldots,S_k$ its branches,
    then we have
$$\mu(T) = \sum_{i=1}^k \mu(S_i) + \begin{cases} 0 & \text{ if $T$ is of type $A$,} \\ 1 & \text{ otherwise.} \end{cases}$$
\item 
A rooted tree $T$ is of type $A$ if and only if all its branches are of type $B$. 
  \end{enumerate}
\end{lemma}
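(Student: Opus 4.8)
The plan is to prove both parts of the lemma simultaneously through a single case analysis, organised around two auxiliary quantities attached to any rooted tree $T$: let $\mu_0(T)$ denote the maximum size of a matching of $T$ that leaves the root \emph{uncovered}, and $\mu_1(T)$ the maximum size of a matching that \emph{covers} the root. By construction $\mu(T) = \max\{\mu_0(T),\mu_1(T)\}$, and $T$ is of type $A$ precisely when $\mu_0(T) = \mu(T)$, of type $B$ precisely when $\mu_0(T) < \mu(T)$.

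First I would record two elementary bounds on $\mu_0$. Since a matching that avoids the root uses no edge incident to the root, it is exactly a disjoint union of matchings in the individual branches; maximising each summand gives $\mu_0(T) = \sum_{i=1}^k \mu(S_i)$. On the other hand, deleting from an arbitrary maximum matching the unique edge incident to the root (if one is present) yields a matching of size at least $\mu(T)-1$ that avoids the root, so $\mu(T)-1 \le \mu_0(T)$; together with the trivial $\mu_0(T)\le \mu(T)$ this sandwiches $\mu_0(T)$ between $\mu(T)-1$ and $\mu(T)$. Applying this to each branch forces the deficiency $\mu(S_j)-\mu_0(S_j)$ to equal $0$ when $S_j$ is of type $A$ and exactly $1$ when $S_j$ is of type $B$.

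Next I would compute $\mu_1(T)$. A matching covering the root contains exactly one edge from the root, say to the root $r_j$ of a branch $S_j$; its restriction to $S_j$ must then avoid $r_j$, while the remaining branches may carry arbitrary matchings. Maximising gives $\mu_1(T) = 1 + \max_{1\le j\le k}\bigl(\mu_0(S_j) + \sum_{i\ne j}\mu(S_i)\bigr) = 1 + \sum_{i=1}^k \mu(S_i) - \min_{1\le j\le k}\bigl(\mu(S_j)-\mu_0(S_j)\bigr)$. By the previous step this minimal deficiency is $0$ if at least one branch is of type $A$ and $1$ if every branch is of type $B$.

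The lemma then falls out of comparing $\mu_0(T)=\sum_i\mu(S_i)$ with $\mu_1(T)$. If all branches are of type $B$, then $\mu_1(T) = \sum_i\mu(S_i) = \mu_0(T)$, so $\mu(T)=\sum_i\mu(S_i)$ and the equality $\mu_0(T)=\mu(T)$ makes $T$ of type $A$, giving the $+0$ branch of part (1). If instead some branch is of type $A$, then $\mu_1(T) = \sum_i\mu(S_i)+1 > \mu_0(T)$, so $\mu(T)=\sum_i\mu(S_i)+1$ and no root-avoiding matching is maximum, whence $T$ is of type $B$, giving the $+1$ branch. Reading off the two outcomes establishes the recursion of part (1) and the equivalence of part (2) at the same time. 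I expect the only delicate point to be the lower bound $\mu_0(T)\ge\mu(T)-1$, which pins the type-$B$ deficiency to exactly one rather than something larger; everything else is bookkeeping over the branch decomposition.
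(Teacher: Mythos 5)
Your proposal is correct, and it takes a recognisably different route from the paper's. The paper argues by case analysis with exchange arguments: it shows that if $T$ is of type $A$ then no branch can be of type $A$ (otherwise adding the edge from the root of $T$ to that branch's root would beat a maximum matching, a contradiction), shows conversely that if all branches are of type $B$ then any matching covering the root is bounded by $\sum_i \mu(S_i)$, and finally pins down $\mu(T)$ in the type-$B$ case by a third, separate estimate. You instead introduce the quantities $\mu_0(T)$ and $\mu_1(T)$ and prove two exact identities, $\mu_0(T)=\sum_i\mu(S_i)$ and $\mu_1(T)=1+\sum_i\mu(S_i)-\min_j\bigl(\mu(S_j)-\mu_0(S_j)\bigr)$, from which both parts of the lemma drop out of a single comparison; the deficiency calculus replaces the paper's proof by contradiction, and you compute $\mu_1$ exactly in all cases, which the paper never does. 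This buys uniformity: the recursion and the type characterisation are established simultaneously rather than in three sub-arguments. Two minor remarks. First, the point you flag as delicate---that the type-$B$ deficiency is \emph{exactly} $1$, via the sandwich $\mu(T)-1\le\mu_0(T)\le\mu(T)$---is in fact dispensable: in the all-branches-$B$ case the definitional bound $\mu(S_j)-\mu_0(S_j)\ge 1$ already yields $\mu_1(T)\le\mu_0(T)$, hence type $A$ and the $+0$ branch, while the other case uses only the definitional deficiency $0$ of a type-$A$ branch. Second, your formula for $\mu_1$ tacitly assumes at least one non-empty branch (the maximum over $j$ can only range over non-empty branches, and $\mu_1$ is undefined for the one-vertex tree); this is exactly where the conventions in the remark following the lemma---the empty tree is of type $B$, the one-vertex tree of type $A$---are needed to keep the bookkeeping consistent. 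Neither point affects the correctness of your argument.
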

The only part that was not explicitly proven above is the formula for $\mu(T)$
in the case that $T$ is of type $B$. This, however, is easy as well: Clearly
the cardinality of a matching is at most $\mu(S_1) + \cdots + \mu(S_k) + 1$
(the summand $1$ accounting for the edge that covers the root). On the other
hand, $\mu(T)$ must be strictly greater than $\mu(S_1) + \cdots + \mu(S_k)$,
since there are matchings of this cardinality that do not cover the root.

\begin{remark}
  Consistently with Lemma~\ref{lem:rec}
  we define the tree $T_1$ that only consists of a single vertex to be of
  type $A$ with $\mu(T_1) = 0$ and 
  the empty tree $T_0$  to be of type $B$ with $\mu(T_0)= 0$.
  This is important for the generating functions constructed below.
\end{remark}

Since we are interested in the parameter $\nu(T) = \dim \ker T = |T| - 2\mu(T)$
rather than $\mu(T)$ itself, we first translate the above formula to a
recursion for $\nu(T)$: since $|T| = |S_1| + \cdots + |S_k| + 1$, we have
$$\nu(T) = \sum_{i=1}^k \nu(S_i) + \begin{cases} 1 & \text{ if $T$ is of type
    $A$,} \\ -1 & \text{ otherwise.} \end{cases}$$

Now let $\trees_k$, $\trees_{k,A}$, $\trees_{k,B}$ denote the set of all $k$-ary trees, $k$-ary trees of type $A$ and $k$-ary trees of type $B$ respectively.
We define the bivariate generating functions
$$
A:=A(u,x) = \sum_{T \in \trees_{k,A}} u^{\nu(T)} x^{|T|}
\qquad \text{and}
\qquad B:= B(u,x) = \sum_{T \in \trees_{k,B}} u^{\nu(T)} x^{|T|},
$$ 
the
summation being over $k$-ary trees in both cases (including the empty tree in
the case of $B$, and the one-vertex tree in the case of $A$).
Since any tree $T$ of type $A$ is a grafting of $k$ (possibly empty) branches $S_1,\ldots,S_k$ of type $B$ (which we write as $T = \bigvee_{i=1}^k S_i$),
we obtain
\begin{align*}
  A(u,x)
  &= \sum_{T\in \trees_{k,A}} u^{\nu(T)}\,x^{\abs{T}}\\
  &= \sum_{S_1,\dots,S_k\in \trees_{k,B}} u^{\nu(\bigvee_{i=1}^k
    S_i)}\,x^{\abs{\bigvee_{i=1}^k S_i}}\\
  &= \sum_{S_1,\dots,S_k\in \trees_{k,B}} u^{1+\sum_{i=1}^k\nu(S_i)}\,x^{1+\sum_{i=1}^k\abs{S_i}}\\
  &= ux \prod_{i=1}^k \sum_{S_i\in \trees_{k,B}} u^{\nu(S_i)}\,x^{\abs{S_i}}\\
  &= ux  B(u,x)^k
  .
\end{align*}

Similarly, in the case of type $B$, we get the following equation
\begin{align*}
  B(u,x)
  &= \sum_{T\in \trees_{k,B}} u^{\nu(T)}\,x^{\abs{T}}\\
  &= 1 + \sideset{}{^\prime}\sum_{S_1,\dots,S_k\in \trees_{k}} u^{\nu(\bigvee_{i=1}^k
    S_i)}\,x^{\abs{\bigvee_{i=1}^k S_i}}\\
  &= 1 + \sideset{}{^\prime}\sum_{S_1,\dots,S_k\in \trees_{k}} u^{-1+\sum_{i=1}^k\nu(S_i)}\,x^{\abs{\bigvee_{i=1}^k S_i}},\\
\intertext{where we took special care of the empty tree 
  and the remaining sum indicated by $\sum'$ runs over all $k$-tuples of trees such that
  at least one of them is not of type $B$; 
 this means that we have to subtract the sum over $k$-tuples of type B trees
 from the sum over $k$-tuples of arbitrary trees:}
%\intertext{now the recursive formula for $\nu$ decreases the power of $u$}
B(u,x)  &= 1 + \sum_{S_1,\dots,S_k\in \trees_{k}} 
      u^{-1+\sum_{i=1}^k\nu(S_i)}
      \,
      x^{1+\sum_{i=1}^k\abs{S_i}}
    -
    \sum_{S_1,\dots,S_k\in \trees_{k,B}} 
      u^{-1+\sum_{i=1}^k\nu(S_i)}
      \,
      x^{1+\sum_{i=1}^k\abs{S_i}}
    \\
  &= 1 + \frac{x}{u}
     \biggl(
       \prod_{i=1}^k \sum_{S_i\in \trees_{k}}  u^{\nu(S_i)}\,x^{\abs{S_i}} 
       -
       \prod_{i=1}^k \sum_{S_i\in \trees_{k,B}}  u^{\nu(S_i)}\,x^{\abs{S_i}} 
     \biggr)\\
  &= 1 + \frac{x}{u}
     \biggl(
       \bigl(A(u,x)+B(u,x)\bigr)^k - B(u,x)^k
     \biggr)
     .
\end{align*}

In conclusion, we have translated the recursive description into
the following two functional equations for $A(u,x)$ and $B(u,x)$:
\begin{equation}
  \label{eq:ABequations}
  \begin{aligned}
A(u,x) &= ux B(u,x)^k, \\
B(u,x) &= 1 + \frac{x}{u} ((A(u,x)+B(u,x))^k - B(u,x)^k). \\
  \end{aligned}
\end{equation}
Finally, we obtain the following generating function for animals (the only
difference lying in the possibility that the root is allowed to have degree
$k+1 = 2\f{}$ as well and the empty tree is excluded this time):
$$F(u,x) = \sum_{\substack{ T \in\CC_e^*}} u^{\nu(T)} x^{|T|} = ux B(u,x)^{k+1} + \frac{x}{u} ((A(u,x)+B(u,x))^{k+1} - (B(u,x))^{k+1}).$$
We are mainly interested in the derivative with respect to $u$, since
$$G(x) = \frac{\partial}{\partial u} F(u,x) \Big|_{u=1} = \sum_{T \in \CC_e^*} \nu(T) x^{|T|}.$$
To save space, we will use the customary abbreviation $F_u$ etc.\ to denote
partial derivatives with respect to $u$. First note that
$$
A(u,x)+u^2B(u,x) = u^2+ux(A(u,x)+B(u,x))^k,
$$
and we can rewrite the identities~\eqref{eq:ABequations} as
\begin{equation}
  \label{eq:BkABkequations}
  \begin{aligned}
    B(u,x)^k &= \frac{A(u,x)}{ux}, \\
    (A(u,x)+B(u,x))^k &= \frac{A(u,x)+u^2(B(u,x)-1)}{ux}.
  \end{aligned}
\end{equation}
Taking the derivative of the second identity at $u=1$ we obtain
$$
A_u(1,x) + B_u(1,x)  = \frac{2(1-B(1,x))+x (A(1,x)+B(1,x))^k}{1-kx(A(1,x)+B(1,x))^{k-1}}.
$$
Using the identities~\eqref{eq:BkABkequations} we can express $F$ as
$$
F(u,x) = A(u,x)B(u,x)(1-\frac{1}{u^2}) + (A(u,x)+B(u,x))(\frac{A(u,x)}{u^2} + B(u,x)-1)
$$
and the derivative at $u=1$ is
\begin{align*}
F_u(1,x)
  &= 2A(1,x)B(1,x) + (A_u(1,x)+B_u(1,x))(A(1,x)+B(1,x)-1) \\
  &\phantom{=}+ (A(1,x)+B(1,x))(-2A(1,x)     +A_u(1,x)+B_u(1,x)) \\
  &= -2A(1,x)^2+(A_u(1,x)+B_u(1,x))(2A(1,x)+2B(1,x)-1)\\
  &= -2A(1,x)^2
     + \frac{2(1-B(1,x))+x(A(1,x)+B(1,x))^k}{1-kx(A(1,x)+B(1,x))^{k-1}}
      (2(A(1,x)+B(1,x))-1)\\
  &= -2A(1,x)^2
     + \frac{(2(1-B(1,x))+A(1,x)+B(1,x)-1)(2(A(1,x)+B(1,x))-1)}{1-k(A(1,x)+B(1,x)-1)/(A(1,x)+B(1,x))},
\end{align*}
making use of~\eqref{eq:BkABkequations} in the last step once again. So we finally obtain
%We have
%\begin{multline*}
%F_u(1,x) = 2x B(1,x)^{k+1} - x(A(1,x)+B(1,x))^{k+1} \\
%+ (k+1)x(A(1,x)+B(1,x))^k(A_u(1,x)+B_u(1,x)),
%\end{multline*}
%and the partial derivatives $A_u(1,x)$ and $B_u(1,x)$ can be determined from the functional equations: first one has
%$$B(u,x) = 1 + \frac{x}{u} \left( (ux B(u,x)^k + B(u,x))^k - B(u,x))^k \right).$$
%Differentiate with respect to $u$, set $u = 1$ and solve the resulting linear equation to obtain
%$$B_u(1,x) = \frac{xB^{k-1}(B+xB^k)(1-(1+xB^{k-1})^k+kxB^{k-1}(1+xB^{k-1})^{k-1})}{(1+kxB^{k-1})(1+xB^{k-1}-kxB^{k-1}(1+xB^{k-1})^k)},$$
%where $B$ is used as an abbreviation for $B(1,x)$. Moreover, we have
%$$A_u(1,x) = xB(1,x)^k + kxB(1,x)^{k-1}B_u(1,x).$$
%Finally, we put everything together to express $F$ in terms of $B$: after some simplifications (which are best performed with the aid of a computer algebra system), one obtains the %following expression:
\begin{multline}\label{eq:G_in_terms_of_B}
G(x) = F_u(1,x) \\
= -2A(1,x)^2 + \frac{(A(1,x)+B(1,x))(A(1,x)-B(1,x)+1)(2A(1,x)+2B(1,x)-1)}{k-(k-1)(A(1,x)+B(1,x))}.$$
\end{multline}

\section{Parametrisation}
\label{sec:parametrisation}

Recall that we are considering percolation on a $(k+1)$-regular tree, where $p = \frac{1}{m} < \frac{1}{k}$ is the percolation probability, and $q = 1-p$. For an animal $T$ (i.e., a potential percolation cluster), the size of the boundary is $|dT| = 2 + (k-1)|T|$, as can be seen immediately by induction on $|T|$. In view of the identity~\eqref{eq:dimensionformula}, we are interested in the expression
\begin{equation}\label{eq:our_constant}
\begin{split}
C(p) &= q + \sum_{T\in\CC_e^*} \frac{\dim \ker T}{|T|} p^{|T|}q^{2+(k-1)|T|} =
q + q^2 \sum_{T\in\CC_e^*} \frac{\dim \ker T}{|T|} (pq^{k-1})^{|T|} \\
&= q + q^2 \int_0^{pq^{k-1}} \frac{G(x)}{x}\,dx,
\end{split}
\end{equation}
since it gives the von Neumann dimension of the kernel of the lamplighter operator $\tilde{\opT}$ on $\CG_m\wr\IF_\f$. The summand $q$ takes care of the ``empty'' animal, i.e., the possibility that the vertex $x$ is not actually in $\CX(\omega)$ (which happens with probability $q$).

In order to compute this integral, we determine a parametrisation of $G$; since
$G$ is a rational function of $x$, $A$ and $B$, we first find such a parametrisation
for the functions $A$ and $B$. 
This is possible because the implicit equation~\eqref{eq:ABequations}
for $B$ defines an algebraic curve of genus zero.
Recall that $A = A(1,x)$ and $B = B(1,x)$ satisfy the equations
\begin{align*}
A &= xB^k, \\
B &= 1+x((A+B)^k-B^k).
\end{align*}
It turns out that the following parametrisation satisfies these two equations:
\begin{equation}
  \label{eq:parametrisation}
  \begin{aligned}
x &= (t-1)t^{k-1}(1+t^{k-1}-t^k)^{k-1}, \\
A &= \frac{t-1}{t(1+t^{k-1}-t^k)}, \\
B &= \frac{1}{t(1+t^{k-1}-t^k)}.
  \end{aligned}
\end{equation}
This parametrisation was essentially obtained by ``guessing'', i.e., finding
the parametrisation in special cases, which was done with an algorithm
by M.~van~Hoeij~\cite {vanHoeij:1994:algorithm} in the
\verb|algcurves| package of the computer algebra system
\verb|Maple|${}^\mathrm{TM}$~\cite{Maple10},
and extrapolating to the general case. Once the parametrisation has
been found, however, it is easy to verify it directly.

The two equations determine the coefficients of the expansions of $A$ and $B$ at $x = 0$ uniquely, hence they define unique functions $A$ and $B$ that are analytic at $0$. The above parametrisation provides such an analytic solution in which $t = 1$ corresponds to $x = 0$. Furthermore, the interval $[1,t_0]$, where $t_0$ is the solution of $t^k-t^{k-1} = \frac{1}{k}$, maps to the interval $[0,x_0]$ with 
$$x_0 = \frac1k \left(1 - \frac1k \right)^{k-1},$$
and the parametrisation is monotone on this interval. At $t = t_0$, it has a singularity (of square root type), which corresponds to the fact that $p = \frac1k$ is the critical percolation parameter and that $x_0$ is the radius of convergence and the smallest singularity of $A$ and $B$ (and thus in turn $G$). Therefore, the computation of~\eqref{eq:our_constant} amounts to integrating a rational function between $0$ and the unique solution $\tau(p)$ of
$$(t-1)t^{k-1}(1+t^{k-1}-t^k)^{k-1} = pq^{k-1}$$
inside the interval $[1,t_0]$. To show existence and uniqueness of $\tau(p)$, note again that the function $x(1-x)^{k-1}$ is strictly increasing on $[0,\frac{1}{k}]$ and thus maps this interval bijectively to $[0,x_0]$. Moreover, it follows that $\tau(p)$ is the unique positive solution of
$$\tau(p)^k-\tau(p)^{k-1} = p.$$
Plugging the parametrisations of $A$ and $B$ into~\eqref{eq:G_in_terms_of_B} yields
$$G(x) =\frac{(t-1)(t^{2k}(1-t)+t^{k-1}((2k+1)t^2-(4k+2)t+2k)+2)}{t^2(1+t^{k-1}-t^k)^2(1+kt^{k-1}-kt^k)}.$$
Together with
$$\frac{dx}{x} = \frac{(kt-k+1)(1+kt^{k-1}-kt^k)}{t(t-1)(1+t^{k-1}-t^k)}\,dt,$$
we finally end up with an integral which has a surprisingly simple antiderivative for arbitrary $k$. This antiderivative was also found by means of computer algebra, but can of course be checked directly to be an antiderivative:
\begin{align*}
C(p) &= q + q^2 \int_1^{\tau(p)} \frac{(kt-k+1)(t^{2k}(1-t)+t^{k-1}((2k+1)t^2-(4k+2)t+2k)+2)}{t^3(1+t^{k-1}-t^k)^3}\,dt \\
&= q + q^2 \frac{(t-1)(1-k+(k+1)t-t^{k+1})}{t^2(1+t^{k-1}-t^k)^2} \Big|_{t = \tau(p)} \\
&= q - p + \frac{(\tau(p)-1)(1-k+(k+1)\tau(p))}{\tau(p)^2},
\end{align*}
which shows that the constant $C(p)$ is always algebraic, since $p = \frac{1}{m}$ is rational in our context and $\tau(p)$ is a solution to an algebraic equation. In particular, for $k = 1$, one has $\tau(p) = 1 + p$, which yields
$$C(p) = 3 - 2p - \frac{2}{1+p}.$$
In general, however, $C(p)$ is not rational: take, for instance, $k = 3$ and $p = \frac14$, to obtain
$$C(p) = -\frac56 - \frac{400}{3(766+258\sqrt{129})^{1/3}} + \frac{2(766+258\sqrt{129})^{1/3}}{3} \approx 0.850971.$$
One can even easily prove the following:
\begin{proposition}
If $p = \frac1m$ for $m > k \geq 3$, then $C(p)$ is an irrational algebraic number.
\end{proposition}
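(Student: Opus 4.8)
The plan is to argue by contradiction, the decisive device being the Galois conjugate of $\tau(p)$. Throughout I write $\tau=\tau(p)$ for the root of $t^{k}-t^{k-1}=p$ lying in $(1,t_0]$; since $p\in\IQ$ and $\tau$ is algebraic, $C(p)$ is automatically algebraic, so only irrationality is at stake. First I would record that $\tau$ itself is irrational. Clearing denominators, $\tau$ is a root of $g(t)=mt^{k}-mt^{k-1}-1\in\IZ[t]$, and by the rational root theorem any rational root must be $\pm 1/b$ with $b\mid m$. Substituting $1/b$ gives $m(1-b)=b^{k}$, impossible for $b\ge 1$; substituting $-1/b$ gives $\pm m(1+b)=b^{k}$, impossible since $b+1\nmid b^{k}$. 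Hence $\tau\notin\IQ$.

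Next I would simplify the closed form. Expanding the numerator, one gets
\[
C(p)=(k+2)-2p-\frac{2k}{\tau}+\frac{k-1}{\tau^{2}},
\]
so $C(p)\in\IQ$ \emph{if and only if} $s:=\dfrac{k-1}{\tau^{2}}-\dfrac{2k}{\tau}\in\IQ$. Assuming this, clearing $\tau^{2}$ shows that $\tau$ is a root of the quadratic $Q(t)=st^{2}+2kt-(k-1)\in\IQ[t]$. Because $\tau>1$ and $k\ge 3$, the quantity $s\tau^{2}=(k-1)-2k\tau$ is negative, so $s<0$; in particular $Q$ is a genuine quadratic, and since $\tau$ is irrational, $Q$ is irreducible over $\IQ$ and is, up to a scalar, the minimal polynomial of $\tau$.

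The heart of the argument is then to examine the second root $\tau'$ of $Q$, i.e.\ the conjugate of $\tau$. Since $g\in\IQ[t]$ vanishes at $\tau$, it vanishes at $\tau'$ as well, so $\tau'$ is again a solution of $\psi(z):=z^{k}-z^{k-1}=p$. By Vieta's formulas $\tau\tau'=-(k-1)/s>0$ (using $s<0$ and $k\ge 3$), whence $\tau'>0$. But $\psi(z)=z^{k-1}(z-1)$ is negative on $(0,1)$, vanishes at $z=1$, and is strictly increasing on $(1,\infty)$ because its derivative $z^{k-2}(kz-(k-1))$ is positive there; consequently $\psi(z)=p>0$ has a \emph{unique} positive solution. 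Therefore $\tau'=\tau$, so $Q$ has a double root, and combining $\tau=-k/s$ with $\tau^{2}=\tau\tau'=-(k-1)/s$ forces $\tau=(k-1)/k<1$, contradicting $\tau>1$. Hence $C(p)\notin\IQ$, and being algebraic it is an irrational algebraic number.

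The main obstacle is really one clean observation: the uniqueness of the positive preimage of $p$ under $z\mapsto z^{k}-z^{k-1}$, which collapses the conjugate pair $\{\tau,\tau'\}$ to a single point and yields the contradiction. The only points requiring care are the sign bookkeeping placing $\tau'$ on the positive axis—this is what excludes the spurious negative branch of $\psi$ present when $k$ is even—and the preliminary check that $\tau$ is irrational, both of which are entirely elementary.
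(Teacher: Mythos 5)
Your proof is correct, and it takes a genuinely different route from the paper's. Both arguments start the same way: assuming $C(p)\in\IQ$, the closed form forces $\tau=\tau(p)$ to satisfy a quadratic with rational coefficients, and one checks via the rational root theorem that $\tau$ itself is irrational, so that quadratic is (up to scaling) the minimal polynomial of $\tau$ and therefore divides $mt^k-mt^{k-1}-1$. From there the paper goes arithmetic: it clears denominators to get an integer quadratic, passes to its primitive part, invokes Gauss' lemma to force the constant coefficient of that primitive quadratic to be $\pm 1$, which pins down $k=3$, $b=1$, $a$ even, and finally derives a contradiction from the bound $C(p)\le 1$. You instead go Galois-theoretic: the conjugate $\tau'$ must also solve $z^k-z^{k-1}=p$, Vieta and the sign of $s$ place $\tau'$ on the positive real axis, and the strict monotonicity of $z\mapsto z^{k-1}(z-1)$ on $(1,\infty)$ (together with its negativity on $(0,1)$) forces $\tau'=\tau$, whence $\tau=(k-1)/k<1$, a contradiction. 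Your version buys a cleaner argument that avoids all the integrality bookkeeping (primitivity, the bound $g\le 2$, the terminal case analysis) and does not need the a priori bound $C(p)\le 1$; it even works for $k=2$, which the statement does not require. One point you leave implicit but which is immediate: $\tau'$ is real, since it equals $-(k-1)/(s\tau)$ with $s,\tau$ real (or because a real quadratic with one real root has two). The only thing the paper's approach arguably buys in return is that it stays entirely within elementary divisibility of integer polynomials.
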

\begin{proof}
Suppose that $C(p)$ is rational. Then 
$$\frac{(\tau(p)-1)(1-k+(k+1)\tau(p))}{\tau(p)^2} = \frac{a}{b}$$
for some coprime integers $a,b$ with $b > 0$. Hence $\tau = \tau(p)$ is a root of the polynomial
$$P(t) = (b(k+1)-a)t^2-2bkt+b(k-1).$$
Divide by $g = \gcd(b(k+1)-a,-2bk,b(k-1))$ to obtain a primitive polynomial $\tilde{P}(t)$ (in the ring-theoretic sense, i.e., a polynomial whose coefficients have greatest common divisor $1$). It is easy to see that $g \leq 2$. Now note that $\tau$ is also a zero of
$$Q(t) = mt^k-mt^{k-1}-1.$$
If $\tau$ was rational, it would have to be of the form $\pm \frac{1}{r}$ (since the denominator has to divide the leading coefficient, while the numerator has to divide the constant coefficient), contradicting the fact that $\tau(p) > 1$. Hence $\tilde{P}$ is the minimal polynomial of $\tau$, and $Q(t)$ must be divisible by $\tilde{P}(t)$ in $\IZ[t]$ (by Gauss' lemma), which implies that the constant coefficient of $\tilde{P}$ must be $\pm 1$. But this is only possible if $b(k-1) = g = 2$, i.e., $k = 3$, $b = 1$, and $a$ must be even. But then
$$C(p) = q-p + \frac{a}{b} \geq q-p+2 = 1+2q > 1,$$
and we reach a contradiction.
\end{proof}

\section{A free product}\label{sec:freeproduct}

The method of the preceding sections is generally not applicable if the Cayley graph is not a tree; however, \eqref{eq:kerdim} remains true if all cycles of $T$ have length $\equiv 2 \mod 4$ (see for instance \cite[Theorem 2]{Borovicanin:2009:nullity}). Hence it is possible to apply the same techique if the free group $\IF_\f{}$ is replaced by special free products such as $\IZ_6 \ast \IZ_6$; the Cayley graph of this group has hexagons as its only cycles and therefore satisfies the aforementioned condition. Once again, one can distinguish between (rooted) animals with the property that there exists a maximum matching that does not cover the root (type A) and (rooted) animals for which this is not the case (type B) and derive recursions. In addition, one needs to take the size of the boundary of an animal into account, which is no longer uniquely determined by the size of an animal. Hence we consider the trivariate generating functions
$$A = A(u,x,y) = \sum_{\clA \text{ of type $A$}} u^{\nu(\clA)}
x^{|\clA|}y^{|d\clA|} \quad \text{and} \quad B = B(u,x,y) = \sum_{\clA \text{ of type $B$}} u^{\nu(\clA)} x^{|\clA|}y^{|d\clA|},$$
for which one obtains, after some lengthy calculations, functional equations in analogy to those in~\eqref{eq:ABequations} as well as an integral representation analogous to~\ref{eq:our_constant} for the von Neumann dimension of the kernel of the lamplighter operator $\tilde{\opT}$ on $\CG_m\wr(\IZ_6 \ast \IZ_6)$. 

%the equations
%\begin{align*}
%A &= xu^{-3}\Big(3A^4B + 3A^4y + u^2(3A^3B^2 + 4A^2B^3 + y(6A^3B + 12A^2B^2) \\
%&\qquad + y^2(2A^2 + 6A^2B)) + u^4(A^2B^3 + 3AB^4 + B^5 + y(3A^2B^2 + 12AB^3 + 5B^4) \\
%&\qquad + y^2(1 + 2B + 2AB + 3B^2 + 6AB^2 + 4B^3))\Big), \\
%B &= xu^{-5}A \Big(A^4 + u^2(2A^3B + 6A^2B^2 + y(2A^3 + 12A^2B) + 4y^2A^2) \\
%&\qquad+ u^4(A^2B^2 + 5AB^3 + 2B^4 + y(2A^2B + 15AB^2 + 8B^3) \\
%&\qquad+ y^2(2 + A + 4B + 6AB + 6B^2))\Big).
%\end{align*}
%The integral that needs to be determined in analogy to~\eqref{eq:our_constant} is given by
%$$\int_0^p \frac{N}{x^3(5(A + B)^4 + 20y(A + B)^3 + 2y^2(1 + 3A + 6A^2 + 3B + 6B^2 + 12AB))-x^2}\,dx \Big|_{y=q,u=1},$$
%where
%\begin{align*}
%N &= A^2 + 2AB - B^2 + x(5A^6 + 10A^5B + 9A^4B^2 - A^2B^4 + 6AB^5 + 3B^6) \\
%&+ xy(10A^5 + 18A^4B - 24A^3B^2 - 40A^2B^3 + 2AB^4 + 10B^5) \\
%&+ xy^2(-2A + 2A^2 + 12A^4 - 2B - 4AB - 8A^2B - 2B^2 - 8AB^2 - 24A^2B^2 - 8AB^3 + 4B^4).
%\end{align*}
In order to determine the resulting integral, one can use the Risch-Trager algorithm, as implemented for example 
in the computer algebra system
\texttt{FriCAS}, a fork of ~\cite{axiom},
Once again, we found that there exists an algebraic antiderivative, so that we obtain an algebraic von Neumann dimension for any $m \geq 3$ (the critical
percolation parameter is $p = 0.339303$ in this case, which is a zero of the polynomial $3p^5-2p^4-2p^3-2p^2-2p+1$). It is likely that it is also irrational for all $m \geq 3$, although we do not have a proof for this conjecture. Moreover, we conjecture that in fact the kernel dimension of the adjacency operator of an arbitrary free product of
cyclic groups $\IZ_{4k+2}$ is algebraic (and probably irrational), but the computations outlined above quickly become intractable by
the present method if more complicated examples are studied.

\bibliography{Atiyah}
\bibliographystyle{plain}

\end{document}